\definecolor{BLUE}{rgb}{0.2, 0.2, 0.6}
\newtheorem{theorem}{Theorem}
\newtheorem{lemma}[theorem]{Lemma}
\theoremstyle{remark}
\newtheorem{remark}{Remark}
\theoremstyle{definition}
\newtheorem{example}{Example}
\newcommand{\I}{\mathbbm{1}}
\newcommand{\bs}{\boldsymbol}
\newcommand{\N}{{\mathbb{N}\cup\lbrace0\rbrace}}
\newcommand{\R}{\mathbb{R}}
\newcommand{\Z}{\mathbb{Z}}
\newcommand{\dd}{\mathrm{d}}
\newcommand{\E}{\mathbb{E}}
\newcommand{\ee}{\mathrm{e}}
\newcommand{\Prob}{\mathbb{P}}
\newcommand{\given}{\,\vert\, }
\newcommand{\defin}[1]{\textbf{#1}}
\renewcommand{\ge}{\geqslant}
\renewcommand{\geq}{\geqslant}
\renewcommand{\leq}{\leqslant}
\begin{document}

\title{\sc Product Disintegrations: Some Examples}

\author{%
Luísa~Borsato\footnote{luisabborsato@gmail.com}
\and 
Eduardo~Horta\footnote{Universidade Federal do Rio Grande do Sul, Brazil. eduardo.horta@ufrgs.br}
\and
Rafael Rigão Souza\footnote{Universidade Federal do Rio Grande do Sul, Brazil. rafars@mat.ufrgs.br}
}

\maketitle

\section{Introduction and general setup}

The main objective of this brief work is to provide a collection of examples involving the concept of a \textbf{product disintegration}, which we introduce here and in a forthcoming paper that addresses the theory and establishes a convergence result (restated here, for convenience, as Theorem~\ref{thm:prop1}, without a proof). Therefore, the present text can be seen as an appendix to the main paper. The concept of a product disintegration generalizes \emph{exchangeability}, a well-known concept in Probability and Statistics \citep{Aldous_1985,Draper1993,Dawid2013,konstantopoulos2019extendibility}, which we recall next.

A sequence $\bs{X} \coloneqq (X_0, X_1,\dots)$ of random variables with values in a Borel space $S$ is said to be \defin{exchangeable} iff for every integer $n\geq1$ and every permutation $\sigma$ of $\{0,\dots,n\}$ it holds that the random vectors $(X_0,\dots,X_n)$ and $(X_{\sigma(0)},\dots, X_{\sigma(n)})$ are equal in distribution. An important characterization of exchangeability is de Finetti's Theorem, which says that a necessary and sufficient condition for a sequence of random variables to be exchangeable is that it is {\it conditionally independent and identically distributed}.
In view of de Finetti's Theorem, we propose to replace the random product measure that characterizes exchangeable sequences---whose factors are all the same---by an arbitrary random product measure, as follows: with $\bs{X}$ as above, we shall say that a sequence $\bs{\xi}\coloneqq\left(\xi_0, \xi_1,\dots\right)$ of random probability measures on $S$ is a \defin{product disintegration of $\bs{X}$} iff, with probability one, the equality
\begin{equation}\label{eq:weaker-product-measure-disintegration}
\Prob\left[X_0\in A_0,\dots,X_n\in A_n\given\bs{\xi}\right] = \xi_0\left(A_0\right)\cdots\xi_n\left(A_n\right)
\end{equation}
holds for each $n\in\N$ and each family $A_0,\dots,A_n$ of measurable subsets of $S$. If $\bs{\xi}$ is a stationary sequence, then we say that $\bs{\xi}$ is a \defin{stationary product disintegration}.
In other words, product disintegration allows one to have conditional independence in terms of a hidden process consisting of a sequence of random probabilities.

\begin{remark}\label{thm:product-disintegration-existence}
Any sequence $\bs{X}\coloneqq (X_0, X_1,\dots)$ of $S$-valued random variables admits a \defin{canonical product disintegration}, obtained by taking $\xi_n(\cdot) = \mathbb{I}_{\lbrace X_n\in\ \cdot\ \rbrace}$ for all integer $n\ge0$.
\end{remark}

Before proceeding, let us fix some notation: in all that follows, $S$ is a compact metrizable space, $C(S)$ is the set of continuous real valued maps defined on $S$, and $M_1(S)$ denotes the set of Borel probability measures on $S$. For $\mu \in M_1(S)$ and $f\in C(S)$, we write $\mu(f) \coloneqq \int_S f(x)\,\mu(\dd x)$. The next result claims that, given a sequence $X_n$, $n\in\N$, of random elements with values in $S$, and a continuous function $f\in C(S)$ (sometimes called an \emph{observable} in the context of Ergodic Theory), a Strong Law of Large Numbers for the process $f\circ X_n$, $n\in\N$, holds if and only if the hidden process used in the product disintegration satisfies an associated Strong Law of Large Numbers itself.

\begin{theorem}\label{thm:prop1}
Let $\bs{X} = \left(X_0, X_1,\dots\right)$ be a sequence of $S$-valued random variables. Assume $\bs{\xi} = (\xi_0, \xi_1,\dots)$ is a product disintegration of $\bs{X}$, and let $f$ be a continuous function from $S$ to $\R$. Then it holds that
\begin{equation}\label{eq:prop1}
\mathbb{P}\left(\lim_{n\to\infty} \frac1n\sum_{i=0}^{n-1}\big(f\circ X_i - \xi_i(f)\big) = 0\right)\ =\ 1.
\end{equation}
In particular, the limit
\begin{math}
X_\infty(f)\coloneqq \lim_{n\to\infty} n^{-1}\sum_{i=0}^{n-1}f\circ X_i
\end{math}
exists almost surely if and only if the limit
\begin{math}
\xi_\infty(f)\coloneqq \lim_{n\to\infty}n^{-1}\sum_{i=0}^{n-1}\xi_i(f)
\end{math}
exists almost surely, in which case one has $X_\infty(f) = \xi_\infty(f)$ almost surely.
\end{theorem}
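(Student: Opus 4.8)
The plan is to reduce the statement to an $L^2$ martingale strong law applied to the centred increments $Y_i \coloneqq f\circ X_i - \xi_i(f)$. First I would record that, $S$ being compact metrizable, $f$ is bounded, say $\|f\|_\infty<\infty$, so that $|\xi_i(f)|\le\|f\|_\infty$ and $|Y_i|\le 2\|f\|_\infty$ for every $i$. Introduce the filtration $\mathcal G_0\coloneqq\sigma(\bs\xi)$ and $\mathcal G_n\coloneqq\sigma\big(\bs\xi, X_0,\dots,X_{n-1}\big)$ for $n\ge1$; then each $Y_i$ is $\mathcal G_{i+1}$-measurable, and the partial sums $M_n\coloneqq\sum_{i=0}^{n-1}Y_i$ (with $M_0\coloneqq0$) form a process adapted to $(\mathcal G_n)_{n\ge0}$.

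The crucial step is to show that $(M_n)$ is a martingale, i.e. that $\E[Y_n\mid\mathcal G_n]=0$ a.s. for every $n$, which amounts to $\E[f\circ X_n\mid\mathcal G_n]=\xi_n(f)$ a.s. Since $\xi_n(f)$ is $\sigma(\bs\xi)\subseteq\mathcal G_n$-measurable, it suffices to check that
\[
\E\big[g(X_n)\,h(X_0,\dots,X_{n-1})\,W\big]=\E\big[\xi_n(g)\,h(X_0,\dots,X_{n-1})\,W\big]
\]
for all bounded measurable $g\colon S\to\R$ and $h\colon S^n\to\R$ and all bounded $\sigma(\bs\xi)$-measurable $W$, because the sets $D\cap\{(X_0,\dots,X_{n-1})\in C\}$ with $D\in\sigma(\bs\xi)$ and $C$ Borel in $S^n$ form a $\pi$-system generating $\mathcal G_n$. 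For $g=\I_{A_n}$, for $h$ a product of indicators $\I_{A_0},\dots,\I_{A_{n-1}}$, and for $W=\I_D$, this identity drops out of \eqref{eq:weaker-product-measure-disintegration}: multiplying that equation by $\I_D$ and taking expectations gives $\E[\I_D\I_{X_0\in A_0}\cdots\I_{X_n\in A_n}]=\E[\I_D\,\xi_0(A_0)\cdots\xi_n(A_n)]$, and applying \eqref{eq:weaker-product-measure-disintegration} once more with $A_n$ replaced by $S$—together with the fact that $\xi_n(A_n)\I_D$ is $\sigma(\bs\xi)$-measurable—rewrites the right-hand side as $\E[\I_D\,\xi_n(A_n)\I_{X_0\in A_0}\cdots\I_{X_{n-1}\in A_{n-1}}]$. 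A routine functional monotone class argument then extends the identity from indicator rectangles to general bounded $g$, $h$, $W$.

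With the martingale property in hand, I would invoke the $L^2$ martingale strong law. The normalized sums $T_n\coloneqq\sum_{i=0}^{n-1}(i+1)^{-1}Y_i$ form an $(\mathcal G_n)$-martingale with $\sup_n\E[T_n^2]=\sum_{i\ge0}(i+1)^{-2}\E[Y_i^2]\le 4\|f\|_\infty^2\sum_{k\ge1}k^{-2}<\infty$, hence $(T_n)$ converges almost surely by the martingale convergence theorem; Kronecker's lemma then yields $n^{-1}M_n\to0$ a.s., which is exactly \eqref{eq:prop1}. The final assertion follows by writing $n^{-1}\sum_{i=0}^{n-1}f\circ X_i=n^{-1}\sum_{i=0}^{n-1}\xi_i(f)+n^{-1}M_n$ and noting that the last term vanishes a.s., so one Cesàro limit exists iff the other does and they then coincide.

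I expect the only real obstacle to be measure-theoretic bookkeeping: extracting from \eqref{eq:weaker-product-measure-disintegration}—a statement about products of measures on rectangles—the conditional-expectation identity $\E[f\circ X_n\mid\bs\xi, X_0,\dots,X_{n-1}]=\xi_n(f)$, and handling the ``almost sure, for all $n$ and all $A_0,\dots,A_n$'' quantifier (harmless, since $S$ is second countable, so one may fix a countable generating algebra and take a single null set). Everything downstream—the bounded-increment estimate, the $L^2$ martingale convergence, and Kronecker's lemma—is standard.
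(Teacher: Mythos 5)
Your proposal cannot be checked against the paper's own argument, because this note deliberately restates Theorem~\ref{thm:prop1} \emph{without} a proof (the proof is deferred to the companion paper); so I can only assess it on its merits, and on its merits it is correct and complete. The key step, $\E\big[f\circ X_n\given \sigma(\bs\xi,X_0,\dots,X_{n-1})\big]=\xi_n(f)$ a.s., is correctly extracted from \eqref{eq:weaker-product-measure-disintegration}: taking expectations against $\I_D$, $D\in\sigma(\bs\xi)$, and using the same identity with $A_n$ replaced by $S$ gives agreement of the two finite measures $E\mapsto\E[\I_{A_n}(X_n)\I_E]$ and $E\mapsto\E[\xi_n(A_n)\I_E]$ on the $\pi$-system you describe, hence on all of $\mathcal G_n$, and the passage from indicators to bounded measurable $g$ (in particular $g=f$, bounded since $S$ is compact) is routine; your worry about the quantifiers is moot here, since the definition in the paper already puts a single null set outside all $n$ and all $A_0,\dots,A_n$, which is stronger than the ``for each $n$ and each tuple, a.s.'' version you need. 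The remaining steps (orthogonality of the bounded martingale increments, $L^2$-boundedness of $T_n=\sum_{i<n}(i+1)^{-1}Y_i$, martingale convergence, Kronecker) are standard and correctly applied, and the final equivalence of the two Cesàro limits is immediate from \eqref{eq:prop1}. For comparison, the most natural alternative is to condition on $\bs\xi$: under $\Prob(\cdot\given\bs\xi)_\omega$ the $X_i$ are independent with laws $\xi_i^\omega$ (this is essentially Lemma~\ref{thm:product-measure-disintegration} plus the existence of regular conditional distributions), so Kolmogorov's strong law for independent, non-identically distributed bounded variables gives \eqref{eq:prop1} conditionally, and one integrates out. Your martingale route proves the same thing while sidestepping regular conditional probabilities and the product-measure identification altogether, at the modest cost of the monotone-class bookkeeping you already acknowledge.
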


\section{Examples}\label{sec:examples}

In what follows, given a probability space $(\Omega, \mathscr{F}, \Prob)$, a \defin{random probability measure on $S$} is a Borel measurable mapping $\xi\colon\Omega \rightarrow M_1(S)$. Throughout this section, its value at a point $\omega\in\Omega$ is denoted by $\xi^\omega$ and $\xi(\omega,\cdot)$ interchangeably.

\subsection{Product disintegrations \emph{per se}}
\begin{example}[Product disintegrations are not (necessarily) unique]\label{example:non-unique}
Let $\bs\vartheta=(\vartheta_n:\,n\in\N)$ be a sequence of independent and identically distributed random variables, uniformly distributed in the unit interval $[0,1]$, and let, for $n\in\N$, $\xi_n$ be the random probability measure on $S\coloneqq\lbrace0,1\rbrace$ defined via $\xi_n^\omega(1) \coloneqq \vartheta_n(\omega)$, where for simplicity we write $\xi_n(x)$, $x\in S$, instead of $\xi_n(\{x\})$. Assume further that $\bs\xi$ is a product disintegration of a given sequence $\bs X$ of Bernoulli random variables. Without much effort, it is possible to prove that $\sigma(\bs\xi) = \sigma(\bs\vartheta)$ and, in particular, it holds that, conditionally on $\bs{\xi}$, each $X_n$ is a Bernoulli random variable with parameter $\vartheta_n$. That is, for each $n\in\N$ we have
\(
\Prob(X_n = 1\given \bs{\xi}) = \vartheta_n.
\)
Now define $\hat{\xi}_n\colon\Omega\rightarrow M_1(S)$ by $\hat{\xi}_n^\omega(1) \coloneqq \delta_{X_n(\omega)}(1) = \I_{[X_n=1]}(\omega)$, so that $\hat{\bs{\xi}}\coloneqq (\hat{\xi}_0,\hat{\xi}_1,\dots)$ is the canonical product disintegration of $\bs{X}$. Clearly $\bs{\xi}$ and $\hat{\bs{\xi}}$ are different since $\hat{\xi}_n^\omega$ is equal either to $\delta_{\lbrace0\rbrace}$ or $\delta_{\lbrace1\rbrace},$ whereas this is not true of $\xi_n$. Indeed, for $\theta\in[0,1)$, we have \(\Prob(\xi_n(1)\leq\theta) = \Prob(\vartheta_n\leq\theta) = \theta\),
whereas
\(
\Prob(\hat{\xi}_n(1)\leq \theta) = \Prob(\I_{[X_n=1]}\leq\theta)
= \Prob(\I_{[X_n=1]} = 0)
= \Prob(X_n = 0).
\)
\end{example}

\begin{example}[Random Walk as a two-stage experiment with random jump probabilities]
In the same setting as Example~\ref{example:non-unique}, let $Z_n \coloneqq 2X_n-1$, $n\in\N$. Clearly $\bs Z = (Z_0, Z_1,\dots)$ is an independent and identically distributed sequence of standard Rademacher random variables, i.e., for each $n \in \N$ it holds that $\Prob(Z_n = +1) = \Prob(Z_n = -1) = \frac{1}{2}$. Indeed,
for any $x_0,x_1,\dots,x_n\in \{0,1\}$, we have
\(
\Prob(Z_0 = 2x_0-1, \dots, Z_n = 2x_n-1) =
\Prob\left(X_0 = x_0 , \dots, X_n = x_n \right)
= \E \prod_{j=0}^n \xi_j(x_j)
= \prod_{j=0}^n \E\xi_j(x_j),
\)
where the last equality follows from the assumption that the $\vartheta_n$'s are independent. Moreover, $\E \xi_j(x_j) = 1/2$ since the left-hand side in this equality is either $\E\vartheta_j$ or $1 - \E\vartheta_j$. Now let $S_0\coloneqq 0$ and $S_n = Z_0 + \cdots + Z_{n-1}$ for $n\geq 1$. By the above derivation, $(S_n\colon\,n\geq 0)$ is the symmetric random walk on $\Z$. Therefore, although --- unconditionally --- at each step the process $(S_n)$ jumps up or down with equal probabilities, we have that conditionally on $\bs\xi$ it evolves according to the following rule: at step $n$, sample a Uniform$[0,1]$ random variable $\vartheta_n$ independent of anything that has happened before (and of anything that will happen in the future), and go up with probability $\vartheta_n$, or down with probability $1 - \vartheta_n$.
\end{example}

\begin{example}
Let $\bs{X} = (X_0, X_1, \dots)$ be an exchangeable sequence of Bernoulli$(p$) random variables. In particular, $\bs X$ satisfies
\begin{equation*}\label{eq:productmeasure}
\Prob\left(X_0 = x_0,\dots,X_n = x_n\given\vartheta\right) = \prod_{i=0}^{n} \vartheta^{x_i}(1-\vartheta)^{1-x_i}.
\end{equation*}
for some random variable $\vartheta$ taking values in the unit interval. Then, defining the random measures $\xi_n$ via $\xi_n(\{1\})\coloneqq \vartheta$ for all $n$, it is clear that $(\xi_0,\xi_1,\dots) =:\bs\xi$ is a stationary product disintegration of $\bs{X}$ --- again using the fact that $\sigma(\bs\xi) = \sigma(\bs\vartheta)$. In particular, in this scenario, an unconditional Strong Law of Large Numbers \emph{does not hold} for $\bs{X}$, unless when $\vartheta$ is a constant. See also Theorem~2.2 in \cite{taylor1987laws}, which provides a characterization of the Strong Law for the class of integrable, exchangeable sequences. This example illustrates the fact that the existence of a product disintegration is not sufficient for the Law of Large Numbers to hold (indeed, by Remark~\ref{thm:product-disintegration-existence}, any sequence of random variables admits a product disintegration).
\end{example}

\begin{example}[Concentration inequalities]\label{example:concentration}
One important consequence of the notion of a product disintegration is that it allows us to easily translate certain concentration inequalities (such as the Chernoff bound, Hoeffding's inequality, Bernstein's inequality, etc) from the independent case to a more general setting. Recall that the classical Hoeffding inequality says that,
if $\bs X = (X_0, X_1,\dots)$ is a sequence of independent random variables with values in $\left[0, 1\right]$, then one has the bound
\(
\Prob\left(S_n \geq t\right)\leq\exp\left(-{2t^2}/{n}\right)
\)
for all $t>\E S_n$, where $S_n\coloneqq\sum_{i=0}^{n-1} X_i$.

\begin{theorem}[Hoeffding-type inequality]\label{thm:hoeffding-new}
Let $\bs X = \left(X_0, X_1,\dots\right)$ be a sequence of random variables with values in the unit interval $S := \left[0,1\right]$, and let $\bs\xi = (\xi_0,\xi_1,\dots)$ be a product disintegration of $\bs X$. Then, for any $t>0$, it holds that
\begin{equation*}
\Prob\left(S_n \geq t\given\E(S_n\given\bs\xi) < t\right) \leq\exp\left(-{2t^2}/{n}\right),
\end{equation*}
where $S_n\coloneqq\sum_{i=0}^{n-1} X_i$.
\end{theorem}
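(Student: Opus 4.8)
The plan is to condition everything on the hidden process $\bs\xi$ and reduce to the classical Hoeffding inequality recalled above. The first step is to pin down the sense in which, conditionally on $\bs\xi$, the variables $X_0,\dots,X_{n-1}$ form an independent family with prescribed marginals. Since $S$ is compact metrizable, so is $S^n$, and hence the vector $(X_0,\dots,X_{n-1})$ admits a regular conditional distribution given $\sigma(\bs\xi)$. The defining identity \eqref{eq:weaker-product-measure-disintegration} gives the factorization of $\Prob[X_0\in A_0,\dots,X_{n-1}\in A_{n-1}\mid\bs\xi]$ over all measurable rectangles; a routine $\pi$--$\lambda$ argument then upgrades this to the statement that, for $\Prob$-almost every $\omega$, that regular conditional distribution equals the product measure $\xi_0^\omega\otimes\cdots\otimes\xi_{n-1}^\omega$. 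Writing $\iota\in C(S)$ for the inclusion $\iota(x)=x$, this yields $\E(X_i\mid\bs\xi)=\xi_i(\iota)$ $\Prob$-a.s., so that $\E(S_n\mid\bs\xi)=\sum_{i=0}^{n-1}\xi_i(\iota)$ is a $\sigma(\bs\xi)$-measurable random variable.

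The second step is to apply the classical inequality inside that conditional distribution. Set $B\coloneqq\{\E(S_n\mid\bs\xi)<t\}\in\sigma(\bs\xi)$ and fix $\omega\in B$. Under the law $\xi_0^\omega\otimes\cdots\otimes\xi_{n-1}^\omega$ the coordinates are independent, take values in $[0,1]$, and their sum has mean $\sum_{i=0}^{n-1}\xi_i^\omega(\iota)<t$; the hypothesis $t>\E S_n$ of the quoted Hoeffding bound is therefore met by the conditional law, and applying it gives $\Prob(S_n\geq t\mid\bs\xi)\leq\exp(-2t^2/n)$ for $\Prob$-almost every $\omega\in B$.

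Finally, since $B\in\sigma(\bs\xi)$ (and assuming $\Prob(B)>0$, the statement being vacuous otherwise), the tower property yields
\[
\Prob(S_n\geq t\mid B)=\E\big[\Prob(S_n\geq t\mid\bs\xi)\mid B\big]\leq\exp(-2t^2/n),
\]
which is the claim. I expect the only genuinely delicate point to be the first step: converting the rectangle-wise factorization in \eqref{eq:weaker-product-measure-disintegration} into honest conditional independence with the right marginals, carried out through a regular conditional distribution so that ``apply Hoeffding conditionally'' is literally meaningful $\omega$ by $\omega$. Compactness and metrizability of $S$ are precisely what make that step clean; everything after it is bookkeeping with conditional expectations.
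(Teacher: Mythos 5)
Your proposal is correct and follows essentially the same route as the paper: apply the classical Hoeffding bound to the conditional (product) law $\Prob(\cdot\given\bs\xi)_\omega$ on the event $\{\E(S_n\given\bs\xi)<t\}$, then integrate and divide by the probability of that event. The paper's proof is just a terser version of yours, leaving implicit the regular-conditional-distribution and $\pi$--$\lambda$ details that you spell out in your first step.
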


\begin{proof} By applying the classical Hoeffding inequality to the probability measures $\Prob(\cdot\given\bs\xi)_\omega$, we have
\begin{equation*}
\Prob\left( S_n \geq t\given\bs{\xi}\right)\I_{\lbrace\E(S_n \given\bs\xi) < t\rbrace}
\leq \exp\left(-{2t^2}/{n}\right)\I_{\lbrace\E(S_n \given\bs\xi) < t\rbrace}.
\end{equation*}
Taking the expectation on both sides of the above inequality, and dividing by $\Prob(\E(S_n\,\given\,\bs\xi)< t)$, yields the stated result.
\end{proof}

Notice that if $\bs\xi$ is the canonical product disintegration of $\bs X$, then the above theorem is not very useful: indeed in this case we have $\E(S_n\given\bs\xi) = S_n$, so the left-hand side in the inequality is zero. The above theorem also tells us that, for $t>0$,
\begin{align*}
\Prob\big(S_n \geq t\big) &= \Prob\big(S_n \geq t\,\big\vert\,\E(S_n\given\bs\xi) < t\big)\times\Prob\big(\E(S_n\given\bs\xi) < t\big)\\
&\qquad\qquad +\quad\Prob\big(S_n \geq t\,\big\vert\,\E(S_n\given\bs\xi) \geq t\big)\times\Prob\big(\E(S_n\given\bs\xi) \geq t\big)\\
&\qquad\qquad\qquad\leq\quad \exp\left(-\frac{2t^2}{n}\right) \,+\, \Prob\big(\E(S_n\given\bs\xi) \geq t\big),
\end{align*}
so the rate at which $\Prob\big(S_n \geq t\big)\to0$ as $t\to\infty$ is governed by the rate at which $\Prob\left(\E(S_n\,\big\vert\,\bs\xi) \geq t\right)\to\infty$ as $t\to\infty$. To illustrate, let us consider two extreme scenarios, one in which $\xi_n = \xi_0$ for all $n$ (so that $\bs X$ is exchangeable) and one in which the $\xi_n$'s are all mutually independent: in the first case, we have that $\E\big(S_n\given\bs\xi\big) = n \int_0^1 x\,\xi_0(\dd x)$, and thus the rate at which $\Prob\big(S_n \geq t\big)\to0$ as $t\to\infty$ depends only on the distribution of the random variable $\int_0
^1 x\,\xi_0(\dd x)$. On the other hand, if the $\xi_n$'s are independent, then we have that $\E\big(S_n\given\bs\xi\big) = \sum_{i=0}^{n-1}\int_0^1 x\,\xi_n(\dd x)$, and in this case the summands are independent random variables with values in the unit interval. Therefore, we can apply the classical Hoeffding inequality to these random variables to obtain the upper bound $\Prob(S_n\geq t)\leq 2\exp(-2t
^2/n)$ for $t>\E S_n$ (in fact, we already know that the upper bound $\exp(-2t^2/n)$ holds, since independence of the $\xi_n$'s entails independence of the $X_n$'s).
\end{example}

\begin{example}
Let $S\coloneqq [a,b]^d$ where $d$ is a positive integer and $a<b\in\R$. Given a sequence $\bs{X} = (X_1, X_2, \dots)$ of $S$-valued random variables, we shall write $X_n = (X_n^{1},\dots,X_n^{d})$. Suppose $\bs\xi = (\xi_0,\xi_1,\dots)$ is a product disintegration of $\bs X$. Equation \eqref{eq:weaker-product-measure-disintegration} then yields, for all measurable sets $A_i^j\subseteq [a,b]$, with $i\in\lbrace0,\dots,n\rbrace$ and $j\in\lbrace1,\dots,d\rbrace$, the equality
\begin{align*}
\Prob(X_0^1\in A_0^1,&\dots X_0^d\in A_0^d,\dots, X_n^1\in A_n^1,\dots,X_n^d\in A_n^d\given\bs{\xi})\\
&=\xi_0(A_0^1\times\cdots\times A_0^d)\cdots\xi_n(A_n^1\times\cdots\times A_n^d).
\end{align*}
An identity as above appears naturally in statistical applications, for instance when one observes samples of size $d$, $(X_n^1,\dots,X_n^d)$, $n=0,1,\dots$, from distinct ``populations'' $\xi_0, \xi_1,\dots$ --- we refer the reader to \cite{petersen2016functional} and references therein for details.
\end{example}

\subsection{Convergence}
\begin{example}[Regime switching models]\label{exmp:regime-switch}
Let $S = \{-1,1\}$ and put $M'\coloneqq \{\mu,\lambda\}\subseteq M_1(S)$ with $\mu(1)>\lambda(1)$. The measures $\mu$ and $\lambda$ are to be interpreted as $2$ distinct ``{regimes}'' (for example, expansion and contraction, in which case one would likely assume $\mu(1)>1/2>\lambda(1)$). Let $(Q_{ij}\colon\,i,j\in M')$ be a row stochastic matrix with stationary distribution $\pi = (\pi_\mu, \pi_\lambda)$. Let $\bs{\xi}\coloneqq (\xi_0,\xi_1,\dots)$ be a Markov chain with state space $M'$, initial distribution $\pi$ and transition probabilities $(Q_{ij})$. Notice that $\E\xi_n = \mu \pi_\mu + \lambda \pi_\lambda$ for all $n$.

Assume $\bs{X}\coloneqq (X_0, X_1,\dots)$ is a sequence of $S$-valued random variables and that $\bs\xi$ is a product disintegration of $\bs X$. Then we have, for $x\in \{-1,1\}$, that $\Prob(X_n = x) = \E\xi_n(x) = \mu(x)\pi_\mu + \lambda(x)\pi_\lambda$. We also have, for $x_0,x_1\in \{-1,1\}$,
\begin{align}\label{eq:markov-switching}
\begin{split}
\Prob(X_0 = x_0, X_1 = x_1) &= \E \xi_0(x_0)\xi_1(x_1)\\
&= \mu(x_0)\mu(x_1)\pi_{\mu}Q_{\mu\mu} + \mu(x_0)\lambda(x_1)\pi_{\mu}Q_{\mu\lambda}\\
&\qquad +\lambda(x_0)\mu(x_1)\pi_\lambda Q_{\lambda\mu} + \lambda(x_0)\lambda(x_1)\pi_\lambda Q_{\lambda\lambda}.
\end{split}
\end{align}
This shows that in general it may be difficult to compute the finite-dimensional distributions of the process $(X_0, X_1,\dots)$ --- although this process inherits stationarity from $\bs\xi$. Also, an easy check tells us that generally speaking $\bs X$ is not a Markov chain.

Nevertheless, assuming $Q$ is irreducible and positive recurrent (i.e., $\pi_\mu,\pi_\lambda\notin\lbrace0,1\rbrace$), we have by the ergodic theorem for Markov chains, that
\begin{equation}\label{eq:markov-chain-ergodic-theorem}
\lim_{n\to\infty}\frac{1}{n}\sum_{k=0}^{n-1}h\circ\xi_k = \pi_\mu h(\mu) + \pi_\lambda h(\lambda) = \E h\circ\xi_0,\qquad{\mathrm{a.s}},
\end{equation}
for any bounded $h\colon M'\to\R$.
Now let $f\colon S\to \R$ and consider the particular case where $h\circ \xi := \xi (f)$. Equation \ref{eq:markov-chain-ergodic-theorem} becomes
\begin{equation}\label{eq:markov-chain-ergodic-theorem-nova}
\lim_{n\to\infty}\frac{1}{n}\sum_{k=0}^{n-1} \xi_k (f) = \pi_\mu \mu(f) + \pi_\lambda \lambda(f) =
\E \xi_0(f) ,\qquad{\mathrm{a.s}}.
\end{equation}
Therefore, using Theorem~\ref{thm:prop1} and then \eqref{eq:markov-chain-ergodic-theorem-nova}, we have that
\begin{align*}
\lim_{n\to\infty}\frac{1}{n}\sum\nolimits_{k=0}^{n-1}f\circ X_k & = \E \xi_0 (f) \\
& = \mu(f) \Prob[\xi_0=\mu]+\lambda(f)\Prob[\xi_0=\lambda] \\
& = \left( f(1)\mu(1) + f(-1)\mu(-1)\right)\pi_\mu
+ \left( f(1)\lambda(1) + f(-1)\lambda(-1)\right)\pi_\lambda \\
& = f(1)\big(\mu(1)\pi_\mu +\lambda(1)\pi_\lambda\big) + f(-1)\big(\mu(-1)\pi_\mu +\lambda(-1)\pi_\lambda\big)
\end{align*}
holds almost surely.
In particular this is true with $f = \I_{\lbrace1\rbrace}$; thus, even though the `ups and downs' of $\bs X$ are governed by a law which can be rather complicated (as one suspects by inspecting equation \eqref{eq:markov-switching}), we can still estimate the overall (unconditional) probability of, say, the expansion regime by computing the proportion of ups in a sample $(X_0,\dots,X_n)$:
\[
\lim_{n\to\infty}\frac{1}{n}\sum_{k=0}^{n-1} \I_{\lbrace1\rbrace}(X_k) = \mu(1)\pi_\mu +\lambda(1)\pi_\lambda.
\]
\end{example}

\begin{example}
Suppose $\bs\vartheta = (\vartheta_0,\vartheta_1,\dots)$ is a submartingale, with $\mathrm{range}(\vartheta_n)\subseteq[0,1]$ for all $n$. By the Martingale Convergence Theorem, there exists a random variable $\vartheta_\infty$ such that $\lim \vartheta_n = \vartheta_\infty$ almost surely (thus, we can assume without loss of generality that $0\leq\vartheta_\infty\leq1$). Furthermore, let $S\coloneqq\{0,1\}$ and, for $n\in\N$, let $\xi_n$ denote the random probability measure on $S$ defined via $\xi_n(\{1\}) = \vartheta_n$, and $\xi_n(\{0\}) = 1-\vartheta_n$. We have $\xi_n(\I_{\lbrace1\rbrace}) = \vartheta_n \to\vartheta_\infty$ a.s. Assume further that $\bs\xi = (\xi_0, \xi_1,\dots)$ is a product disintegration of a sequence $\bs X = (X_0, X_1,\dots)$ of random variables with values in $S$. Using Theorem~\ref{thm:prop1} we have
\[ \lim_{n \to \infty} \frac{1}{n}\sum_{i=0}^{n-1} \I_{\lbrace1\rbrace}(X_i) =
\lim_{n \to \infty} \frac{1}{n}\sum_{i=0}^{n-1} \xi_i(\I_{\lbrace1\rbrace})
= \lim_{n \to \infty} \frac{1}{n}\sum_{i=0}^{n-1}\vartheta_i=\vartheta_\infty \quad \text{a.s.}
\]
which means, the proportion of 1's in $(X_0,\dots,X_n)$ approaches $\vartheta_\infty$ with probability one.

To illustrate, let $(U_n\colon n\geq1)$ be a sequence of independent and identically distributed Uniform$[0,1]$ random variables. Let $\vartheta_0 = U_0/2$ and, for $n\geq1$, define $\vartheta_{n} \coloneqq \vartheta_{n-1} + 2^{-(n+1)}U_{n}$. Figure \ref{fig:submartingale} displays, in blue, a simulated sample path of the submartingale $(\vartheta_n\colon n\in\N)$ up to $n=20$. The $\circ$'s represent the successive outcomes of the coin throws (where the probability of `heads' in the $n$th throw is $\vartheta_n$). In purple are displayed the sample path of the means $(n^{-1}S_n\colon n\in \N)$, where $S_n$ is the partial sum $\sum_{i=0}^{n-1}X_n$. In this model, even if we only observe the outcomes of the coin throws, we can still estimate the value of $\vartheta_\infty$: all we need to do is to compute the proportion of heads in $X_0,\dots,X_n$, with $n$ large.
\begin{figure}[h!]
\begin{center}
\includegraphics[scale=.9]{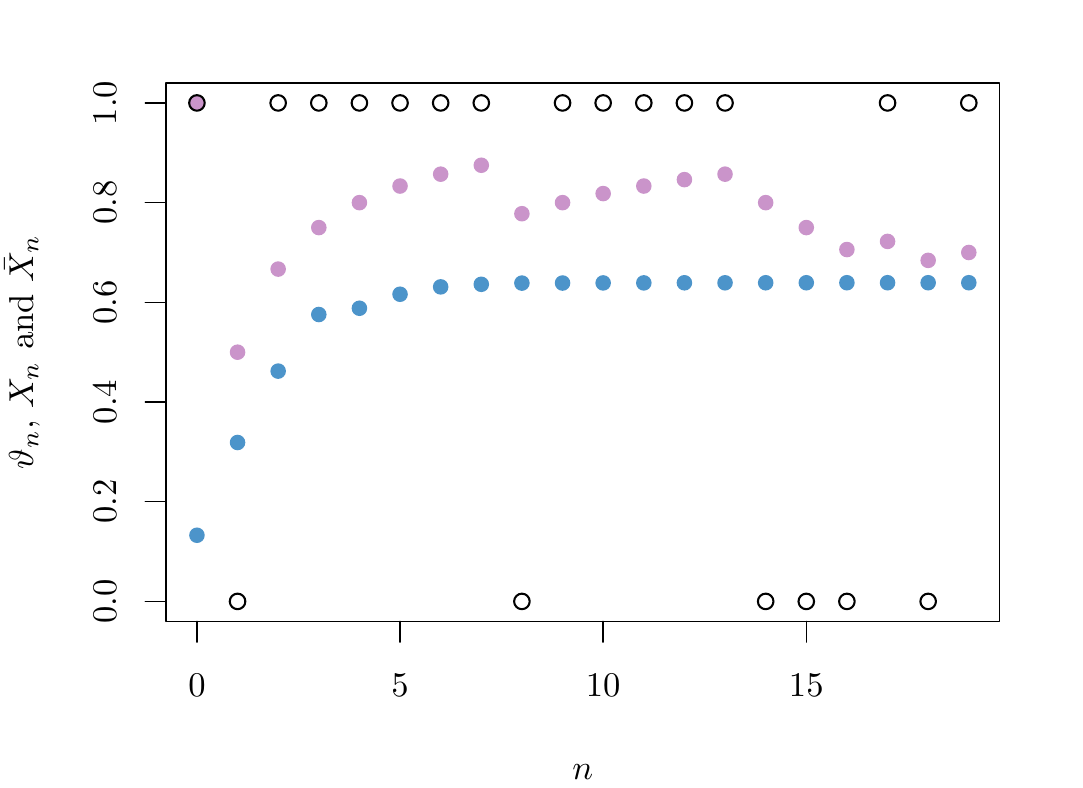}
\caption{A sample path of the submartingale $(\vartheta_n)$.}\label{fig:submartingale}
\end{center}
\end{figure}
\end{example}

The following lemma gives us a characterization of product disintegrations. It allows us to work with a requirement seemingly weaker than the almost sure validity of eq.\ \eqref{eq:weaker-product-measure-disintegration} uniformly for all integer $n\ge1$ and Borel sets $A_0,\dots,A_n$; in practice, this lemma is merely a ``rearrangement of quantifiers''. It will be used in the next example, the result itself as well as the reasoning used in its proof.

\begin{lemma}\label{thm:product-measure-disintegration}
Let $\bs{X} \coloneqq \left(X_0,X_1,\dots\right)$ be a sequence of random variables taking values in a compact metric space $S$, and let $\bs{\xi} = (\xi_0,\xi_1,\dots)$ be a sequence of random probability measures on $S$. Then $\bs\xi$ is a product disintegration of $\bs X$ if and only if for each $n$ and each $(n+1)$-tuple $A_0,\dots,A_n$ of measurable subsets of $S$, the equality \eqref{eq:weaker-product-measure-disintegration} holds almost surely.
\end{lemma}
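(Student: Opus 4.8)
The plan is to prove the substantive ``if'' direction by what is essentially a rearrangement of quantifiers; the ``only if'' direction is immediate from the definition, since a product disintegration satisfies \eqref{eq:weaker-product-measure-disintegration} almost surely for each individual $n$ and each individual tuple.

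For the converse, I would first reduce from uncountably many tuples to countably many. As $S$ is compact metric, hence second countable, I can fix a countable $\pi$-system $\mathcal{C}\subseteq\mathcal{B}(S)$ that contains $S$ and generates $\mathcal{B}(S)$: for instance, the family of all finite intersections of members of a countable base of open sets, together with $S$ itself. Compactness of $S$ (hence Polishness) also ensures that, for each $n$, the random vector $(X_0,\dots,X_n)$ possesses a regular conditional distribution given $\sigma(\bs\xi)$; I would fix such a kernel $\kappa_n$, so that for every Borel rectangle $A_0\times\cdots\times A_n$ the map $\omega\mapsto\kappa_n^\omega(A_0\times\cdots\times A_n)$ is a version of $\Prob[X_0\in A_0,\dots,X_n\in A_n\given\bs\xi]$. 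Applying the hypothesis to each $n$ and each tuple $(A_0,\dots,A_n)\in\mathcal{C}^{\,n+1}$ yields a null set off which $\kappa_n^\omega(A_0\times\cdots\times A_n)=\xi_0^\omega(A_0)\cdots\xi_n^\omega(A_n)$; since there are only countably many such pairs, the union $N$ of all these null sets is still null, and off $N$ this identity holds simultaneously for every $n$ and every tuple drawn from $\mathcal{C}$.

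The last step is a pointwise uniqueness argument. Fixing $\omega\notin N$ and $n$, the rectangles $A_0\times\cdots\times A_n$ with each $A_i\in\mathcal{C}$ form a $\pi$-system that contains $S^{n+1}$ and generates $\mathcal{B}(S^{n+1})=\mathcal{B}(S)^{\otimes(n+1)}$ (using second countability of $S$ once more to identify the Borel $\sigma$-algebra of the product with the product $\sigma$-algebra). The probability measures $\kappa_n^\omega$ and $\xi_0^\omega\otimes\cdots\otimes\xi_n^\omega$ agree on this $\pi$-system, hence by Dynkin's $\pi$-$\lambda$ theorem they coincide on all of $\mathcal{B}(S^{n+1})$. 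Specializing to rectangles of arbitrary Borel sets then shows that \eqref{eq:weaker-product-measure-disintegration} holds off the single null set $N$ for all $n$ and all tuples $A_0,\dots,A_n$, which is precisely the statement that $\bs\xi$ is a product disintegration of $\bs X$.

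I do not anticipate a real obstacle, in line with the paper's remark that the lemma is morally a relabelling of quantifiers. The only genuinely delicate point is the order in which the quantifiers are swapped: one cannot pass directly from ``for each tuple, almost surely'' to ``almost surely, for every tuple'' over an uncountable family, which is exactly why the argument first restricts to the countable class $\mathcal{C}$ and only afterwards lets the tuples range over all Borel sets through the uniqueness theorem for measures; the existence of the kernels $\kappa_n$, where compactness of $S$ enters, is what makes the pointwise-in-$\omega$ use of $\pi$-$\lambda$ legitimate.
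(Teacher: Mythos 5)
Your proof is correct and follows essentially the same route as the paper's: fix regular conditional distributions, reduce to a countable generating $\pi$-system of rectangles so that the hypothesis yields only countably many null sets, and then invoke the uniqueness theorem for measures ($\pi$-$\lambda$) to extend the identity to all Borel sets off a single null set. The only difference is packaging: the paper works with one regular conditional distribution of the whole sequence on $S^\N$ (identified as a Borel space via Kallenberg's Lemma~1.2) and compares it with the infinite product measure $\prod_{n=0}^\infty \xi_n^\omega$ on the product $\sigma$-field, whereas you argue level-by-level on $S^{n+1}$ with kernels $\kappa_n$ --- an equally valid, marginally more elementary variant of the same idea.
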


\begin{proof}
The `only if' part of the statement is trivial. For the `if' part, let $\mathscr{S}^\N$ denote the product $\sigma$-field on $S^\N$. By Lemma 1.2 in \cite{kallenberg2002foundations}, $\mathscr{S}^\N$ coincides with the Borel $\sigma$-field corresponding to the product topology on $S^\N$, and therefore $S^\N$ is a Borel space. By Theorem 6.3 in \cite{kallenberg2002foundations}, there exists an event $\Omega^*\subseteq\Omega$ with $\Prob(\Omega^*)=1$ such that
\(
\bs{A}\mapsto \Prob(\bs{X}\in\bs{A}\given\bs{\xi})_\omega
\)
is a probability measure on $\mathscr{S}^\N$ for each $\omega\in\Omega^*$.

Now let $\mathfrak{C}\coloneqq\{\bs{A}_k\colon\,k\in\N\}$ be a countable collection of sets of the form $\bs{A}_k = B_0^k\times\cdots\times B_{n(k)}^k\times S\times\cdots$ which generates $\mathscr{S}^\N$. By assumption, for each $k$ there is an event $\Omega_k\subseteq \Omega$ with $\Prob(\Omega_k)=1$ such that
\(
\Prob(\bs{X}\in\bs{A}_k\given\bs{\xi})_\omega = \xi_0^\omega(B_0^k)\cdots\xi_{n(k)}^\omega(B_{n(k)}^k)
\)
holds for $\omega\in\Omega_k$. Thus, for $\omega\in \Omega'\coloneqq \left(\bigcap_{k=0}^\infty \bs{A}_k\right)\cap \Omega^*$, with $\Prob(\Omega')=1$, the probability measures $\Prob(\bs{X}\in\cdot\given\bs{\xi})_\omega$ and $\prod_{n=0}^\infty\xi_n^\omega$ agree on a $\pi$-system which generates $\mathscr{S}^\N$, and therefore they agree on $\mathscr{S}^\N$. This establishes the stated result.
\end{proof}

We now present an example where product disintegrations can be applied to a problem in finance.

\begin{example}[Stochastic Volatility models]
This example demonstrates that a certain class of stochastic volatility models can be accommodated into our framework of product disintegrations. Stochastic volatility models are widely used in the financial econometrics literature, as they provide a parsimonious approach for describing the volatility dynamics of a financial asset's return --- see \cite{shephard2009overview} and \cite{davis2009svmodels} and references therein for an overview. A basic specification of the model\footnote{Which can be relaxed by putting $g\circ H_t$ in place of $\ee^{H_t/2}$, and allowing $\bs H$ to evolve according to more flexible dynamics.} is as follows: let $\bs Z = (Z_t\colon t\in\Z)$ and $\bs W = (W_t\colon t\in \Z)$ be centered iid sequences, independent from one another, and define $\bs X$ and $\bs H$ via the stochastic difference equations
\[
X_t = \ee^{H_t/2}Z_t,\quad t\in\N,
\quad\mbox{and}\quad
H_t = \alpha + \beta H_{t-1} + W_t,\quad t\geq 1,
\]
where $\alpha$ and $\beta$ are real constants and where $H_0$ follows some prescribed distribution. The random variable $X_t$ is interpreted as the return (log-price variation) on a given financial asset at date $t$, and the $H_t$'s are latent (i.e, unobservable) random variables that conduct the volatility of the process $\bs X$. Usually this process is modelled with Gaussian innovations, that is, with $W_t$ and $Z_t$ normally distributed for all $t$. In this case the random variables $X_t$ are supported on the whole real line, so we need to consider other distributions for $\bs Z$ and $\bs W$ if we want to ensure that the $X_t$'s are compactly supported.

Our objective is to show how Theorem~\ref{thm:prop1} can be used to estimate certain functionals of the latent volatility process $\bs H$ in terms of the observed return process $\bs X$. To begin with, notice that if $|\beta|<1$ and if $H_0$ is defined via the series $H_0 \coloneqq {(1-\beta)}^{-1} \alpha+\sum_{k=0}^{\infty} \beta^k W_{-k}$, then $\bs H$ (and $\bs X$) is strictly stationary and ergodic, in which case we have that
\begin{equation}
\lim_{n\to\infty} \frac1n\sum_{t=0}^{n-1}
g\circ H_t = \E g\circ H_0
\end{equation}
almost surely, for any $\Prob_{H_0}$-integrable $g\colon S_H\to\R$, where we write $S_H \coloneqq \mathrm{supp}\, H_0$. Also, notice that, by construction, we have for all $n$, all measurable $A_0,\dots,A_n\subseteq S \coloneqq \mathrm{supp}\,X_0$ and all $\bs h = (h_0,h_1,\dots)\in S_H^{\N}$,
\begin{align}
\nonumber\Prob\big(X_0 \in A_0,\dots,X_n\in A_n\,|\,\bs H = \bs h\big) &=
\Prob\big(\ee^{H_0/2}Z_0\in A_0,\dots, \ee^{H_n/2}Z_n\in A_n\given \bs H = \bs h\big)\\
\nonumber&\overset{(*)}=\Prob\big(\ee^{h_0/2}Z_0\in A_0,\dots, \ee^{h_n/2}Z_n\in A_n\given \bs H = \bs h\big)\\
\nonumber&\overset{(**)}=\Prob\big(\ee^{h_0/2}Z_0\in A_0,\dots, \ee^{h_n/2}Z_n\in A_n\big)\\
\nonumber&= \prod_{t=0}^n
\Prob\big(\ee^{h_t/2}Z_t\in A_t\big)\\
&\overset{(***)}= \prod_{t=0}^n
\Prob\big(X_t\in A_t\,|\,\bs H = \bs h\big).\label{eq:product-disintegraion-H}
\end{align}
Where $(*)$ is yielded by the substitution principle, $(**)$ follows from the fact that $\bs Z$ and $\bs H$ are independent (as $\bs H$ only depends on $\bs W$), and $(***)$ is just a matter of repeating the previous steps. A reasoning similar to the one used in the proof of Lemma~\ref{thm:product-measure-disintegration} then tells us that $\Prob(\bs X\in\cdot\,|\,\bs H)_\omega$ is a product measure on $S^{\N}$ for almost all $\omega$. Also, notice that in particular we have that $\Prob(X_t\in A\given \bs H = \bs h) = \Prob(\ee^{h_t/2}Z_t\in A)$ for all $t$. In fact, let $\varphi\colon S_H\to M_1(S)$ be defined via $\varphi(h,A):= \Prob(\ee^{h/2}Z_0\in A)$, for $h\in S_H$ and measurable $A\subseteq S$, where we write $\varphi(h,A)$ in place of $\varphi(h)(A)$ for convenience. Since the $Z_t$'s are identically distributed, we have in particular that $\varphi(h,A) = \Prob(\ee^{h/2}Z_t\in A)$ for all $t$. The preceding derivations now allow us to conclude that
\begin{equation}\label{eq:product-disintegraion-H-2}
\varphi(H_t(\omega),A) = \Prob(X_t\in A\given H_t)_\omega = \Prob(X_t\in A\given \bs H)_\omega.
\end{equation}

We are now in place to introduce a product disintegration of $\bs X$, by defining $\xi_t^\omega(A):= \Prob(X_t\in A\given \bs H)_\omega$ for measurable $A\subseteq S$, $t\in\Z$ and $\omega\in \Omega$. To see that $\bs{\xi} = (\xi_0,\xi_1,\dots)$ is indeed a product disintegration of $\bs X$, first notice that $\xi_0(A_0)\cdots\xi_n(A_n)$ is $\sigma(\bs \xi)$-measurable for every $n$ and every $(n+1)$-tuple $A_0,\dots,A_n$ of measurable subsets of $S$. Moreover, defining $\psi\colon S_H^\N\to M_1(S)^\N$ via $\psi(h_0,h_1,\dots) = (\varphi(h_0), \varphi(h_1),\dots)$, we obtain, by equations~\eqref{eq:product-disintegraion-H} and \eqref{eq:product-disintegraion-H-2},
\begin{align*}
\E\big(\xi_0(A_0)\cdots\xi_n(A_n)\I_{[\bs\xi\in\bs B]}\big) &= \E\big(\varphi(H_0,A_0)\cdots\varphi(H_n,A_n)\I_{[\bs H \in \psi^{-1}(\bs B)]}\big)\\
&= \Prob\big(X_0\in A_0,\dots, X_n\in A_n, \bs H\in \psi^{-1}(\bs B)\big)\\
&= \Prob\big(X_0\in A_0,\dots, X_n\in A_n, \bs\xi\in \bs B\big),
\end{align*}
whence $\Prob(X_0\in A_0,\dots,X_n\in A_n\,|\,\bs\xi) = \xi_0(A_0)\cdots\xi_n(A_n)$, and then Lemma~\ref{thm:product-measure-disintegration} tells us that $\bs\xi$ is --- voilà --- a product disintegration of $\bs X$.

Now, since $\varphi$ is continuous and one-to-one, we have that $\varphi$ is a homeomorphism from $S_H$ onto its range whenever $S_H$ is compact (in particular, $\mathrm{range}(\varphi)$ is compact, hence measurable, in $M_1(S)$). Also, as $\xi_t = \varphi\circ H_t$ for all $t$, we have that $H_t = \varphi^{-1}\circ \xi_t$ is well defined. Suppose now that $f\colon S\to\R$ is a given continuous function. We have \[\xi_t^\omega(f) = \int_S f(x)\,\xi_t^\omega(\dd x) = \int_S f(x)\,\varphi(H_t(\omega),\dd x) =: g(H_t(\omega))\] and, as $\bs H$ is ergodic, it holds that
\(
\lim_{n\to\infty} n^{-1} \sum_{t=0}^{n-1} \xi_t(f) = \E g\circ H_0,
\)
where we know that the expectation is well defined, as
\(
\E |g\circ H_0| \leq \E \left(\int_S |f(x)|\,\xi_0(\dd x) \right)< \infty,
\)
with the expected value given by $\E g\circ H_0 = \int_S f(x)\,\Prob_{X_0}(\dd x)$.
We can now apply Theorem~\ref{thm:prop1} to see that
\[
\E g\circ H_0 = \lim_{n\to\infty} n^{-1} \sum_{t=0}^{n-1} f\circ X_t.
\]
The conclusion is that, for suitable $g$ of the form $g(h) = \int_S f(x)\,\varphi(h,\dd x)$, we can estimate $\E g\circ H_0$ by the data $(X_0, X_1, \dots, X_n)$ as long as $n$ is large enough, even if we cannot observe $\bs H$. Of course, this follows from ergodicity of $\bs X$, but it is interesting anyway to arrive at this result from an alternate perspective; moreover, one can use Hoeffding type inequalities as in Example~\ref{example:concentration} to easily derive a rate of convergence for sample means of $\bs X$ based on the rate of convergence of sample means of $\bs H$.
\end{example}

\section{Acknowledgements}

The author Luísa Borsato was supported by grants
2018/21067-0 and 2019/08349-9, São Paulo Research Foundation (FAPESP).

\end{document}